\def\transpose{\intercal}
\def\diff{\mathrm{d}}
\DeclareMathOperator{\cn}{cn}
\DeclareMathOperator{\sn}{sn}
\newtheorem{lemma}{Lemma}
\numberwithin{equation}{section}
\begin{document}

\title[Periodic Solutions for the 1d Cubic Wave Equation with Dirichlet BC]{Periodic Solutions for the 1d Cubic Wave Equation with Dirichlet Boundary Conditions}

\author{Filip Ficek\orcidlink{0000-0001-5885-7064}}
\address{University of Vienna, Faculty of Mathematics, Oskar-Morgenstern-Platz 1, 1090 Vienna, Austria}
\address{University of Vienna, Gravitational Physics, Boltzmanngasse 5, 1090 Vienna, Austria}
\email[]{filip.ficek@univie.ac.at}
\author{Maciej Maliborski$^{\dagger}$\orcidlink{0000-0002-8621-9761}}
\address{University of Vienna, Faculty of Mathematics, Oskar-Morgenstern-Platz 1, 1090 Vienna, Austria}
\address{University of Vienna, Gravitational Physics, Boltzmanngasse 5, 1090 Vienna, Austria}
\email[]{maciej.maliborski@univie.ac.at}
\thanks{We acknowledge the support of the Austrian Science Fund (FWF) through Project \href{http://doi.org/10.55776/P36455}{P 36455} and the START-Project \href{http://doi.org/10.55776/Y963}{Y 963}.}
\thanks{$^{\dagger}$Corresponding author.}

\keywords{Time-periodic solutions; Nonlinear wave equation; Bifurcations}
\subjclass{Primary: 35B10; Secondary: 35B32, 35L71, 65M70, 65P30}

\begin{abstract}
We study time-periodic solutions for the cubic wave equation on an interval with Dirichlet boundary conditions. We begin by following the perturbative construction of Vernov and Khrustalev and provide a rigorous derivation of the fourth-order expansion in small amplitude, which we use to verify the Galerkin scheme. In the main part, we focus on exploring large solutions numerically. We find an intricate bifurcation structure of time-periodic solutions forming a fractal-like pattern and explore it for the first time. Our results suggest that time-periodic solutions exist for arbitrary frequencies, with appearance of fine bifurcation structure likely related to the Cantor set families of solutions described in previous rigorous works.
\end{abstract}

\date{\today}

\maketitle

\tableofcontents

\section{Introduction}
\label{sec:Introduction}

We consider a 1d cubic wave equation on an interval
\begin{equation}
	\label{eq:23.02.14_01}
	\partial_{t}^{2}u = \partial_{x}^{2}u \pm u^{3}\,,
	\quad (t,x)\in\mathbb{R}_{+}\times [0,\pi]
	\,,
\end{equation}
subject to Dirichlet boundary conditions
\begin{equation}
	\label{eq:23.02.14_01a}
	u(t,0)=0=u(t,\pi)\,,
	\quad t\in\mathbb{R}_{+}
	\,,
\end{equation}
and with smooth initial data ($u(0,x)$, $\partial_{t}u(0,x)$). The equation \eqref{eq:23.02.14_01} is Hamiltonian with conserved energy
\begin{equation}
	\label{eq:24.05.08_01}
	E[u] = \int_{0}^{\pi}\left(\frac{1}{2}\left(\partial_{t}u\right)^{2}+\frac{1}{2}\left(\partial_{x}u\right)^{2}\mp\frac{1}{4}u^{4}\right)\diff{x}\,.
\end{equation}
It is easy to show that, for defocusing nonlinearity there are no static solutions other than the zero solution. In contrast, for the focusing equation there exists a countable family of nontrivial static solutions $s_{\kappa}(x)$, $\kappa\in\mathbb{N}$. Excited states, $\kappa>1$, are generated by scaling\footnote{See Eq.~\eqref{eq:24.05.28_01} below.} $s_{\kappa}(x) = \kappa s_{1}(\kappa x)$ of the fundamental solution, given by
\begin{equation}
	\label{eq:24.05.26_01}
	s_{1}(x) = \sqrt{2}\left(3E_{1}/\pi\right)^{1/4}\sn\left(\left(3E_{1}/\pi\right)^{1/4}x,i\right)\,,
	\quad
	E_{1}\equiv E[s_{1}] = 16K(i)^{4}/(3\pi^{3})\,,
\end{equation}
where $\sn(x,k)$ is Jacobi elliptic function and $K(k)$ is the complete elliptic integral of the first kind, see \cite[\href{https://dlmf.nist.gov/22.2}{Sec.~22.2}]{NIST:DLMF} and \cite[\href{https://dlmf.nist.gov/19.2.ii}{Sec.~19.2(ii)}]{NIST:DLMF} respectively. For $\kappa>1$ there is $E_{\kappa}\equiv E[s_{\kappa}]=\kappa^{4}E_{1}$.

In this work we investigate in detail non-trivial time-periodic solutions, i.e. time-dependent solutions to \eqref{eq:23.02.14_01} satisfying
\begin{equation}
	\label{eq:23.02.14_03}
	u(t+T,x) = u(t,x)\,,
	\quad
	x\in (0,\pi)
	\,,
\end{equation}
with $0<T<\infty$.
Let us first consider a linearisation around the trivial solution $u=0$. By dropping the cubic term in \eqref{eq:23.02.14_01}, we can write a general solution as a linear combination of  $\sin{nx}$, the eigenmodes of the operator $\partial_{x}^{2}$, oscillating in time with the corresponding eigenfrequency $n\in\mathbb{N}$, i.e.
\begin{equation}
	\label{eq:}
	u(t,x) = \sum_{n\geq 1} (a_{n}\sin{nt} + b_{n}\cos{nt})\sin{nx}\,,
\end{equation}
where the constants $a_{n},b_{n}\in\mathbb{R}$ are uniquely determined by the initial data. As a side remark, from this it follows that the zero solution is linearly stable. Moreover, we immediately see that there are plenty of time-periodic solutions, in particular a single spatial mode solution $(a_{n}\sin{nt} + b_{n}\cos{nt})\sin{nx}$, $\forall n\in\mathbb{N}$.
A natural question then arises: can any of these solutions be extended as solutions of the nonlinear equation \eqref{eq:23.02.14_01}?

Since the frequencies of the linear problem are natural numbers the analysis of the nonlinear problem naturally encounters resonances. This issue is particularly visible in the perturbative calculation, where resonances spoil the naive perturbative expansion by generating secular terms. Thus, a special care needs to be taken when studying problems with fully resonant spectrum of linear frequencies.

Since the classic paper \cite{lidskii1988periodic}, periodic solutions of cubic wave equation have been an object of thorough investigations. It resulted in a plethora of existence theorems for small amplitude periodic solutions to \eqref{eq:23.02.14_01}-\eqref{eq:23.02.14_01a}. In spite of the fact that these results were obtained by various means, such as averaging techniques \cite{bambusi2001families}, Nash-Moser theorem \cite{Berti.2006, berti2007nonlinear}, variational principle \cite{berti2008cantor}, or Lindstedt series techniques \cite{GM.2004, GMP.2005}, they seem to share a common feature: the sets of frequencies (or amplitudes, depending on the chosen parametrisation) of the found periodic solutions are nowhere dense. The gaps arise from additional restrictions imposed to overcome the small divisor problem. Our main motivation here is to understand the overall structure of the periodic solutions, in particular how these gaps are reflected in it. We discuss both the focusing and defocusing cases.

Using a robust spectrally convergent numerical code, based on Galerkin approximation, we discover a complex structure of time-periodic solutions to \eqref{eq:23.02.14_01}. Parts of this pattern have been earlier observed in \cite{Arioli.2017}. We provide a detailed analysis of these bifurcation structures for the first time.

In addition, for small solutions we use the Poincar\'e-Lindstedt perturbative method and construct families of solutions bifurcating from arbitrary linear eigenfrequency, thus generalising the results presented in \cite{Vernov.1998,Khrustalev.2000,Khrustalev.2001}. We also verify the validity of their ansatz for the solution in the lowest order and expand their perturbative construction by discussing higher orders.
Although the perturbative approach provides a limited number of terms, and the convergence of this series is questionable, the results are accurate enough to test the numerical scheme in the regime of very small solutions.

\section{Perturbative construction}
\label{sec:PerturbativeConstruction}

For a moment, we focus on the defocusing case, i.e., we consider a minus sign in Eq.~\eqref{eq:23.02.14_01}.

\subsection{Ansatz and expansion}
\label{sec:AnsatzAndExpansion}

We introduce a new time coordinate
\begin{equation}
	\label{eq:23.02.15_01}
	\tau = \Omega t
	\,,
\end{equation}
so that the period of a solution is $T=2\pi$. Then the equation \eqref{eq:23.02.14_01} becomes
\begin{equation}
	\label{eq:23.02.15_02}
	\Omega^{2}\partial_{\tau}^{2}u = \partial_{x}^{2}u - u^{3}
	\,.
\end{equation}
In the following, we will focus on solutions that bifurcate from the linearised frequency $\Omega=1$. This is because solutions bifurcating from other frequencies $\Omega=n\in\mathbb{N}$ are easily obtained using the symmetry of the equation \eqref{eq:23.02.15_02}, which says that if $(u(\tau,x), \Omega)$ is a solution, then $(\tilde{u}(\tau,x), \tilde{\Omega})$ given by
\begin{equation}
	\label{eq:24.05.28_01}
	(u(\tau,x),\Omega)\rightarrow(\tilde{u}(\tau,x),\tilde{\Omega})=(n\,u(m\tau,nx),n\Omega/m)\,,
	\quad
	n,m\in\mathbb{N}
	\,.
\end{equation}
is also a solution.

With this in mind we write the perturbative ansatz\footnote{Which takes into account the nature of the nonlinearity, and is equivalent to the one used in \cite{Khrustalev.2001} up to the redefinitions: $\varepsilon\rightarrow\sqrt{\varepsilon_{KV}}$ and $u\rightarrow \varepsilon u_{KV}$.} for a solution 
\begin{equation}
	\label{eq:23.02.15_03}
	u(\tau,x) = \varepsilon u^{(1)}(\tau,x) + \varepsilon^{3} u^{(3)}(\tau,x) + \cdots
	\,,
\end{equation}
and the frequency
\begin{equation}
	\label{eq:23.02.15_04}
	\Omega = 1 + \xi_{2}\varepsilon^{2} + \xi_{4}\varepsilon^{4} + \cdots
	\,,
\end{equation}
where $\xi_{i}$ are real numbers to be fixed at the later stages of the construction. Plugging \eqref{eq:23.02.15_03} and \eqref{eq:23.02.15_04} into \eqref{eq:23.02.15_02} and expanding for small $\varepsilon$ we get a hierarchy of equations, with the first few given explicitly
\begin{align}
	\label{eq:23.02.15_05a}
	\partial_{\tau}^{2}u^{(1)} - \partial_{x}^{2}u^{(1)} &= 0\,,
	\\
	\label{eq:23.02.15_05b}
	\partial_{\tau}^{2}u^{(3)} - \partial_{x}^{2}u^{(3)} &= - 2\xi_{2}\partial_{\tau}^{2}u^{(1)} - \left(u^{(1)}\right)^{3}\,,
	\\
	\label{eq:23.02.15_05c}
	\partial_{\tau}^{2}u^{(5)} - \partial_{x}^{2}u^{(5)} &= - 2\xi_{2}\partial_{\tau}^{2}u^{(3)} - \left(2\xi_{4}+\xi_{2}^{2}\right)\partial_{\tau}^{2}u^{(1)}-3u^{(3)}\left(u^{(1)}\right)^{2}
	\,.
\end{align}
To fix the ambiguity of the definition of the small parameter $\varepsilon$ we require the $\sin{\tau}\sin{x}$ mode to be present only at the first perturbative order, i.e. we set
\begin{equation}
	\label{eq:24.05.12_03}
	\int_{0}^{\pi}\diff{\tau}\int_{0}^{\pi}\diff{x}\, u^{(2j+1)}(\tau,x)\sin{\tau}\sin{x} = 0
	\,,
	\quad
	j\geq 1
	\,.
\end{equation}

\subsection{First order}

At the lowest order, we consider solutions of the equation \eqref{eq:23.02.15_05a} in the form
\begin{equation}
	\label{eq:23.02.15_06}
	u^{(1)}(\tau,x) = \sum_{n=1}^{\infty}a_{n}\sin n\tau \sin nx
	\,,
\end{equation}
with the expansion coefficients $a_{n}$ not fixed at this stage. As will be shown shortly, the solution needs to be a specific combination of an infinite number of modes, contrary to other models where it suffices to take at the lowest order a single eigenmode corresponding to the bifurcating frequency \cite{Maliborski.2013,Maliborski.2015}.

It may seem that we restricted ourselves by fixing the phases of each mode. However, as it will be clear from the following construction, the solutions do actually synchronise their phases, and this synchronisation holds at each perturbative order.

\subsection{Third order}
At the next order, we have to solve the inhomogeneous equation \eqref{eq:23.02.15_05b}. Since we are looking for bounded-in-time solutions the resonant terms need to vanish. This requirement imposes conditions on $\xi_{2}$ and $a_{n}$. Explicitly, projecting the source term of \eqref{eq:23.02.15_05b} onto $\sin{j\tau}\sin{jx}$ we get
\begin{equation}
	\label{eq:23.02.15_08}
	\int_{0}^{\pi}\diff{\tau}\int_{0}^{\pi}\diff{x}\left( 2\xi_{2}\partial_{\tau}^{2}u^{(1)}(\tau,x) + \left(u^{(1)}(\tau,x)\right)^{3}\right)\sin{j\tau}\sin{jx} = 0
	\,,
\end{equation}
for $j\in\mathbb{N}$. We have the following
\begin{lemma}
Let $k=0.451075598810\ldots$ be a unique root of
\begin{equation}
	\label{eq:24.06.03_02}
	K(k)\left(6E(k)+(8k^{2}-7)K(k)\right) = 0
	\,,
\end{equation}
on $[0,1]$, with $E(k)$ denoting here the complete elliptic integral of the second kind \cite[\href{https://dlmf.nist.gov/19.2.ii}{Sec.~19.2(ii)}]{NIST:DLMF}. If $\xi_{2}=1/256$ and
\begin{equation}
	\label{eq:24.06.03_01}
	u^{(1)}(\tau,x)=\frac{k}{2\gamma}\left[\cn\left(\frac{4}{\gamma}(\tau-x),k\right)-\cn\left(\frac{4}{\gamma}(\tau+x),k\right)\right]
	\,,
	\quad
	\gamma=2\pi/K(k)
	\,,
\end{equation}
where $\cn(x,k)$ is a Jacobi elliptic function \cite[\href{https://dlmf.nist.gov/22.2}{Sec.~22.2}]{NIST:DLMF}, then the resonant terms \eqref{eq:23.02.15_08} vanish.
\end{lemma}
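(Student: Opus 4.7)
The plan is to exploit the d'Alembert factorization. Writing the first-order ansatz \eqref{eq:23.02.15_06} equivalently as
\[
u^{(1)}(\tau,x) = \tfrac{1}{2}\bigl[f(\tau-x) - f(\tau+x)\bigr], \qquad f(\xi) = \sum_{n\geq 1} a_n \cos n\xi,
\]
produces an even, $2\pi$-periodic $f$; this representation solves \eqref{eq:23.02.15_05a} and enforces the Dirichlet boundary conditions automatically. I abbreviate $F \equiv f(\tau-x)$ and $G \equiv f(\tau+x)$ and will use the identities $F - G = 2u^{(1)}$ together with the cubic expansion $(F-G)^3 = (F^3 - G^3) - 3FG(F-G)$.

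The central idea is to postulate that $f$ itself satisfies an autonomous second-order ODE of the form $f''(\xi) = \alpha f(\xi) + \beta f(\xi)^3$, precisely the equation obeyed by $\cn(\cdot,k)$. Then $2\partial_\tau^2 u^{(1)} = \alpha(F-G) + \beta(F^3 - G^3)$, and the cubic identity above eliminates $(u^{(1)})^3$ from the source of \eqref{eq:23.02.15_05b} to give
\[
-2\xi_2\partial_\tau^2 u^{(1)} - (u^{(1)})^3 = -\Bigl(2\xi_2 + \tfrac{1}{4\beta}\Bigr)\partial_\tau^2 u^{(1)} + \tfrac{\alpha}{4\beta}\,u^{(1)} + \tfrac{3}{4}\,FG\,u^{(1)}.
\]
The first two summands already lie in the resonant subspace spanned by $\{\sin j\tau\sin jx\}_{j\geq 1}$, so everything hinges on the projection of the last term.

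The key step is to show that $FG\,u^{(1)}$ projects onto $\sin j\tau\sin jx$ solely through the mean value $c_0 := (2\pi)^{-1}\int_0^{2\pi} f^2$. Decomposing $f^2 = c_0 + \tilde h$ with $\tilde h$ of zero mean yields
\[
FG\,u^{(1)} = \tfrac{1}{2}\bigl[f^2(\tau-x)\,f(\tau+x) - f(\tau-x)\,f^2(\tau+x)\bigr] = -c_0\,u^{(1)} + \tfrac{1}{2}Q,
\]
where $Q$ is a bilinear combination of $\cos n(\tau-x)\cos m(\tau+x)$ with $n,m\geq 1$. Product-to-sum turns each such factor into $\cos(p\tau + qx)$ with $\{|p|,|q|\} = \{n+m,\,|n-m|\}$, and a nonzero projection onto $\sin j\tau\sin jx$ would require $|p| = |q| = j$, forcing $nm = 0$; this is excluded, so $Q$ is orthogonal to the resonant subspace. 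This bookkeeping---identifying the antisymmetric structure that makes every off-diagonal cross term disappear---is the step I expect to be the main obstacle.

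Collecting the three projections, the resonance condition at mode $j$ becomes $a_j \cdot \bigl\{[2\xi_2 + 1/(4\beta)]j^2 + \alpha/(4\beta) - 3c_0/4\bigr\} = 0$. Since the Fourier series of $\cn$ has nonzero coefficients at $j = 1, 3, 5, \ldots$, the bracket must vanish identically as a polynomial in $j^2$, producing the two conditions $\xi_2 = -1/(8\beta)$ and $\alpha = 3\beta c_0$. Setting $f(\xi) = (k/\gamma)\cn(4\xi/\gamma,k)$ with $\gamma = 2\pi/K(k)$, a direct computation gives $\beta = -32$ (whence $\xi_2 = 1/256$) and $\alpha = 16(2k^2-1)/\gamma^2$, while $c_0 = K[E - (1-k^2)K]/(4\pi^2)$ from the standard formula $\int_0^K \cn^2(z,k)\,\diff z = [E(k) - (1-k^2)K(k)]/k^2$. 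The condition $\alpha = 3\beta c_0$ then reduces algebraically to \eqref{eq:24.06.03_02}, and uniqueness of the root on $[0,1]$ follows from the intermediate value theorem (the expression equals $-\pi/2$ at $k=0$ and diverges to $+\infty$ as $k\to 1^-$) together with a monotonicity check via $E'(k) = (E-K)/k$ and $K'(k) = (E-(1-k^2)K)/(k(1-k^2))$.
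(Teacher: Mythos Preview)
Your proof is correct and takes a genuinely different route from the paper.

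The paper works entirely in Fourier-coefficient space: it expands $(u^{(1)})^3$ via the triple-product formula for sines, obtains an infinite algebraic system for the $a_j$, and then verifies that the Fourier coefficients $f_n=q^{n/2}/(1+q^n)$ of $\cn$ solve it by invoking the ODE for $\cn$. The resulting condition on $k$ involves $\sum_m f_m^2$, and converting this to the closed form $6E(k)+(8k^2-7)K(k)=0$ requires a separate theta-function identity (their Appendix~A). Your d'Alembert decomposition $u^{(1)}=\tfrac12[f(\tau-x)-f(\tau+x)]$ and the cubic identity $(F-G)^3=(F^3-G^3)-3FG(F-G)$ reorganise the source so that the only nontrivial resonant contribution comes from the mean $c_0$ of $f^2$; your orthogonality argument for the remainder $Q$ (that $\cos m(\tau-x)\cos n(\tau+x)$ with $m,n\geq1$ can never produce a diagonal $\sin j\tau\sin jx$) is exactly the structural fact the paper obtains only after lengthy index bookkeeping. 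Computing $c_0$ via $\int_0^K\cn^2=[E-(1-k^2)K]/k^2$ then bypasses the theta-function appendix entirely---in effect you are replacing their Dieckmann identity by Parseval. This is cleaner and makes the appearance of $\cn$ conceptually transparent (it is singled out by the ansatz $f''=\alpha f+\beta f^3$).

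One point where you are thinner than the paper: the uniqueness of the root. Writing $g(k)=6E+(8k^2-7)K$, the derivative formulas you quote give
\[
g'(k)=\frac{(2k^2-1)E(k)+(1-k^2)(1+8k^2)K(k)}{k(1-k^2)},
\]
but positivity of the numerator is not automatic for $k<1/\sqrt2$ (the $E$-coefficient is negative there). The paper closes this with the bounds $\sqrt{1-k^2}\,K<E<\tfrac14(1+\sqrt{1-k^2})^2K$; you should either cite these or give an equivalent estimate to complete the monotonicity step.
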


\begin{proof}
As the first step we use \eqref{eq:23.02.15_06} to rewrite the condition \eqref{eq:23.02.15_08} as
\begin{equation}
	\label{eq:23.02.15_09}
	2\left(\frac{\pi}{2}\right)^{2}\xi_{2}j^{2}a_{j} + \int_{0}^{\pi}\diff{\tau}\int_{0}^{\pi}\diff{x}\left(\sum_{n=1}^{\infty}a_{n}\sin{n\tau}\sin{nx}\right)^{3}\sin{j\tau}\sin{jx} = 0
	\,.
\end{equation}
Next, the product rule for the $\sin$ function
\begin{multline}
	\label{eq:23.02.15_10}
	\sin{nx}\sin{mx}\sin{lx} = 
	\\
	\frac{1}{4}\left[
	\sin(-n+m+l)x + \sin(n-m+l)x + \sin(n+m-l)x - \sin(n+m+l)x
	\right]
\end{multline}
after a tedious calculation leads to
\begin{multline}
	\label{eq:23.02.15_11}
	- 32\xi_{2}j^{2}a_{j} + 
	\left(3\sum_{l=1}^\infty\sum_{m=1}^\infty a_{l+m-j}a_{l}a_{m} + 3\sum_{l=1}^\infty \sum_{m=1}^\infty a_{l+m+j}a_{l}a_{m} 
	\right.
	\\
	\left.
	+ \sum_{l=1}^{j-1} \sum_{m=1}^{j-l-1} a_{-l-m+j}a_{l}a_{m} + 6\sum_{m=1}^\infty a_{m}^{2}a_{j}\right) = 0
	\,.
\end{multline}
Thus, we are left with an infinite system of algebraic equations.

Following \cite{Vernov.1998,Khrustalev.2000,Khrustalev.2001}, which in turn was motivated by numerical experiments, we show that for $\xi_{2}$ and $k$ specified in the statement, the system \eqref{eq:23.02.15_11} is solved by the Fourier coefficients $f_n$ of the Jacobi elliptic function
\begin{equation}
	\label{eq:23.02.24_01}
	\cn(x,k) = \frac{\gamma}{k}\sum_{n=1}^{\infty}f_{2n-1}\cos\left((2n-1)\frac{\gamma}{4}x\right)
	\,,
\end{equation}
where the coefficients $f_{n}$ are given by
\begin{align}
	\label{eq:23.02.24_03}
	f_n=\begin{cases}
	\displaystyle \frac{q^{n/2}}{1+q^n},\qquad & \mbox{if $n$ is odd}\, ,
	\\
	\displaystyle 0 \qquad & \mbox{if $n$ is even}\, .
	\end{cases}
\end{align}
where $q$ is a nome function in $k$ given by \cite[\href{https://dlmf.nist.gov/22.2.E1}{(22.2.1)}]{NIST:DLMF}  
\begin{align}\label{eqn:nome}
	q=\exp\left(-\pi \frac{K\left(\sqrt{1-k^2}\right)}{K(k)}\right) \,.
\end{align}
Now, let us expand the third power of $\cn(x,k)$ as a Fourier series. The expression
\begin{multline}
	\label{eq:23.02.24_04}
	\cn(x,k)^{3} = 
	\\
	\left(\frac{\gamma}{k}\right)^{3}\sum_{n,m,l=1}^\infty f_{2n-1}f_{2m-1}f_{2l-1} \cos\left((2n-1)\frac{\gamma}{4}x\right)\cos\left((2m-1)\frac{\gamma}{4}x\right)\cos\left((2l-1)\frac{\gamma}{4}x\right)
	\,,
\end{multline}
can be reduced with the use of the following identity
\begin{multline}
	\label{eq:3coss}
	\cos nx\, \cos mx \, \cos lx = \\
	\frac{1}{4}\left(\cos(-n+m+l)x + \cos(n-m+l)x + \cos(n+m-l)x + \cos(n+m+l)x\right) \,.
\end{multline}
Then the orthogonality relation for $\cos$ functions lets us write \eqref{eq:23.02.24_04} as
\begin{equation}
	\label{eq:23.02.24_06}
	\cn(x,k)^{3} = \frac{\gamma^{3}}{4k^{3}}\sum_{n=1}^{\infty}F_{n}(f)\cos\left(n \frac{\gamma}{4} x\right)
	\,,
\end{equation}
where the form of $F_n$ depends on the parity of its index:
\begin{multline}
	\label{eq:23.02.27_01}
	F_{2n-1}(f) = 3 \sum_{l=1}^\infty \sum_{m=1}^\infty\left(
	f_{2l-1}f_{2m-1}f_{2(l+m-n)-1} + f_{2l-1}f_{2m-1}f_{2(l+m+n)-3}\right)
	\\
	+ \sum_{l=1}^n \sum_{m=1}^{n-l} f_{2l-1}f_{2m-1}f_{2(-l-m+n)+1}
	\,,
\end{multline}
\begin{equation}
	\label{eq:23.02.27_02}
	F_{2n}(f) = 0
	\,.
\end{equation}
Using \eqref{eq:23.02.24_06} we find that the resonant equation \eqref{eq:23.02.15_11}, when evaluated at $a_{j}=f_{j}$ for $j$ odd, cf.~\eqref{eq:23.02.24_03}, takes the form
\begin{equation}
	\label{eq:23.02.24_07}
	2\xi_{2}j^{2}f_{j} - \frac{1}{16}\left(F_{j}(f) + 6\sum_{m=1}^{\infty}f_{m}^{2}f_{j}\right)=0
	\,.
\end{equation}
From the differential equation satisfied by $\cn(x,k)$
\begin{equation}
	\label{eq:23.05.15_01}
	\frac{\diff^{2}}{\diff{x}^{2}}\cn(x,k) = (2k^{2}-1)\cn(x,k)-2k^{2}\cn(x,k)^{3}
	\,,
\end{equation}
we have
\begin{equation}
	\label{eq:23.02.24_08}
	F_{j}(f) = \left(\frac{2(2k^{2}-1)}{\gamma} + \frac{j^2}{8}\right)f_{j}
	\,,
\end{equation}
which allows us to rewrite \eqref{eq:23.02.24_07} as follows
\begin{equation}
	\label{eq:23.02.24_09}
	0 = f_{j}\left[2\xi_{2}j^{2} - \frac{1}{16}\left(\frac{2(2k^{2}-1)}{\gamma} + \frac{j^2}{8} + 6\sum_{m=1}^{\infty}f_{m}^{2}\right)\right]
	\,.
\end{equation}
Comparing coefficients of a different power of $j$ we get
\begin{align}
	\label{eq:23.02.24_10}
	\xi_{2} &= \frac{1}{256}\,,
	\\
	\label{eq:23.02.24_11}
	\frac{2k^{2}-1}{\gamma} + 3 \sum_{m=1}^{\infty}f_{m}^{2} &= 0
	\,.
\end{align}
Since $\gamma$ and $f_m$ are functions of $k$, the second equation is an implicit condition for the elliptic modulus. As we prove in Appendix \ref{sec:DieckmannIdentity}, Eq.~\eqref{eq:23.02.24_11} has a unique solution $k=0.451075598810\ldots$. Hence, there exists a unique value of $k$ such that $f_n$ solve \eqref{eq:23.02.15_11}. 

As the last step, we simply unravel the explicit form of $u^{(1)}$ by changing $a_n$ in \eqref{eq:23.02.15_06} to $f_n$. Then, elementary transformations lead to
\begin{align}
	\label{eq:23.02.28_01}
	u^{(1)}(\tau,x)&=\sum_{n=1}^\infty f_n \sin{n\tau}\sin{n x}=\sum_{n=1}^\infty f_{2n-1} \sin{(2n-1)\tau}\sin{(2n-1) x}=\\
	&=\frac{1}{2}\sum_{n=1}^\infty f_{2n-1} \left[\cos{(2n-1)(\tau-x)}-\cos{(2n-1)(\tau+x)}\right]=\\
	&=\frac{k}{2\gamma}\left[\cn\left(\frac{4}{\gamma}(\tau-x),k\right)-\cn\left(\frac{4}{\gamma}(\tau+x),k\right)\right]\,,
\end{align}
where we have used \eqref{eq:23.02.24_01}.
\end{proof}

At this point the resonant terms in \eqref{eq:23.02.15_05c} have been removed and the lowest order solution \eqref{eq:23.02.15_06} is fixed uniquely. Next, using the ansatz
\begin{equation}
	\label{eq:23.02.15_000}
	u^{(3)}(\tau,x) = \sum_{j,k=0}^{\infty}b_{jk}\sin{j\tau}\sin{kx}
	\,,
\end{equation}
we can find the explicit solution for the off diagonal coefficients $b_{jk}$ to be
\begin{equation}
	\label{eq:23.02.15_0000}
	b_{jk} = \frac{B_{jk}}{k^{2}-j^{2}}
	\,,
	\quad
	j\neq k\, ,
\end{equation}
whereas $b_{jj}$ are left undetermined. The numbers $B_{jk}$ are simply non-diagonal Fourier coefficients of $(u^{(1)})^{3}$. They can be written down explicitly using the special form of $f_j$ (\ref{eq:23.02.24_03}). The fact that $f_j$ are nonzero only for odd $j$ leads to $B_{jk}$ also being nonzero only for odd indices. Then, after resummations presented in the Appendix~\ref{sec:DerivationOfExplicitExpressionsForDjk} one gets the following formula
\begin{equation}
	\label{eq:23.03.14_001}
	B_{jk}=\begin{cases}
	\displaystyle -\frac{3}{128}\frac{j-k}{\sinh\left(\frac{j}{2}\ln q\right) - \sinh\left(\frac{k}{2}\ln q\right)}\, ,\qquad& \mbox{if $j-k \equiv 0 \mod 4$\,,}
	\vspace{2ex}
	\\
	\displaystyle \frac{3}{128}\frac{j+k}{\sinh\left(\frac{j}{2}\ln q\right) + \sinh\left(\frac{k}{2}\ln q\right)}\, ,\qquad& \mbox{if $j-k \equiv 2 \mod 4$\,.}
	\end{cases}
\end{equation}
Note that for the off diagonal coefficients there is $b_{jk}=-b_{kj}$.

\subsection{Higher orders}
\label{sec:higher_orders}

We look for the resonant terms in the Eq.~\eqref{eq:23.02.15_05c}. If we set $b_{jj}=0$ in \eqref{eq:23.02.15_000} then, since $b_{jk}=-b_{kj}$, the last term on the right hand side of \eqref{eq:23.02.15_05c} does not contain diagonal terms, see Appendix \ref{sec:ResonanceConditionsAtThirdOrder}. Consequently, to remove the resonant terms generated by $u^{(1)}$ it is necessary to set
\begin{equation}
	\label{eq:24.05.09_01}
	\xi_{4} = - \frac{1}{2}\xi_{2}^{2}\,,
\end{equation}
Finally, the solution $u^{(5)}$ can be written, similarly as at the previous order, as
\begin{equation}
	\label{eq:24.05.09_02}
	u^{(5)}(\tau,x) = \sum_{j,k=0}^{\infty}c_{jk}\sin{j\tau}\sin{kx}
	\,,
\end{equation}
with
\begin{equation}
	\label{eq:24.05.09_03}
	c_{jk} = \frac{1}{k^{2}-j^{2}}\left(2\xi_{2}j^{2}\frac{B_{jk}}{k^{2}-j^{2}}-3C_{jk}\right)
	\,,
	\quad
	j\neq k
\end{equation}
where $C_{jk}$ are the off diagonal Fourier coefficients of $u^{(3)}\left(u^{(1)}\right)^2$ and $c_{jj}$ are to be set at the next order. In principle, this procedure could be continued to an arbitrary order, with the diagonal terms and frequency expansion coefficients serving as free parameters to eliminate resonances. However, since the calculations become increasingly complex at this stage, we stop the procedure without providing the explicit formula for $C_{jk}$. In principle this gives only a formal solution and based on the existing results \cite{GMP.2005} we do not expect the series to converge. In summary, this approach yields an approximation to a time-periodic solution up to $\varepsilon^4$ order (included). 

\subsection{Focusing nonlinearity}
\label{sec:FocusingNonlinearity}
The sign of the nonlinearity in Eq.~\eqref{eq:23.02.14_01} has a minor impact on the presented results. In the focusing case, the introduction of the rescaled time $\tau=\Omega t$ leads to $\Omega^{2}\partial_{\tau}^{2}u = \partial_{x}^{2}u + u^{3}$. Then, this equation at the lowest perturbative order \eqref{eq:23.02.15_05a} is exactly the same, while the next order differs only in the sign before the cubic term
\begin{equation}
	\label{eq:23.04.13_01}
	\partial_{\tau}^{2}u^{(3)} - \partial_{x}^{2}u^{(3)} = -2\xi_{2}\partial_{\tau}^{2}u^{(1)} + \left(u^{(1)}\right)^{3}
	\,,
\end{equation}
cf. \eqref{eq:23.02.15_05b}. Since we are interested in such $u^{(1)}$ that the diagonal terms on the right hand side vanish, one can just define $\tilde{\xi}_2=-\xi_2$ and insert it into the equation. This change reduces the problem to the one studied in the defocusing case above. As a result, we get the same functional from for $u^{(1)}$, while now $\xi_2=-1/256$. Also the next order can be easily covered as once again we can set the diagonal terms in $u^{(3)}$ to zero. The non-diagonal terms are given up to sign by the same expressions as in the defocusing case, see \eqref{eq:23.03.14_001}. At the next order, following the same argument, we get $\xi_4=-\xi_2^2/2$. For higher orders, the perturbative equations become more complex, resulting in no simple correspondence between focusing and defocusing solutions, as it was the case for the lowest orders.

Alternatively, one can explore the symmetry that transforms a time-periodic solution $(u(\tau,x),\Omega)$ of the defocusing equation  into a solution $(\tilde{u}(\tau,x),\tilde{\Omega})$ of the focusing equation
\begin{equation}
	\label{eq:24.05.13_01}
	(\tilde{u}(\tau,x),\tilde{\Omega}) = (\Omega^{-1}u(x,\tau),\Omega^{-1})
	\,.
\end{equation}
However, this yields a solution with a different parametrisation, in which \eqref{eq:24.05.12_03} is no longer satisfied.

\section{Numerical construction}
\label{sec:NumericalConstruction}

The following numerical approach applies to both signs of the nonlinearity.
However, in the subsequent subsections, we focus on the defocusing case,
while the results for the focusing nonlinearity are discussed at the end of this section.

\subsection{Numerical approach}
\label{sec:NumericalApproach}
To find time-periodic solutions of \eqref{eq:23.02.15_02}, we employ a Galerkin method with numerical integration, also referred to as the collocation method in the weak form \cite{Shen.2011}. We look for solutions in a finite-dimensional subspace of a Hilbert space\footnote{The use of the $\cos$ functions in the $\tau$ direction, as opposed to $\sin$ implemented in the perturbative construction, is merely for convenience and corresponds to a particular choice of phase for the solutions. Additionally, considering the functional form of the solution, as suggested by the perturbative results of the previous section, we only consider odd Fourier modes.}
\begin{equation}
	\label{eq:24.05.09_04}
	\textrm{span}\left\{\left.\cos(2j+1)\tau \sin(2k+1)x\,\right|\ j=0,\ldots,N_{\tau}-1\,,\ k=0,\ldots,N_{x}-1\right\}
	\,,
\end{equation}
i.e. we approximate a solution $u(\tau,u)$ by a finite Fourier series
\begin{equation}
	\label{eq:24.05.13_02}
	u_{N_{\tau},N_{x}}(\tau,x) = \sum_{j=0}^{N_{\tau}-1}\sum_{k=0}^{N_{x}-1}a_{jk}\cos(2j+1)\tau \sin(2k+1)x
	\,,
\end{equation}
On the product space $(\tau,x)\in[0,2\pi]\times[0,\pi]$ we define the collocation points
\begin{equation}
	\label{eq:24.05.13_03}
	\tau_{j} = \frac{\pi(j+1/2)}{2N_{\tau}+1}\,,
	\quad
	j=0,\ldots,N_{\tau}-1\,,
	\quad
	x_{k} = \frac{\pi(k+1)}{2N_{x}+1}
	\,,
	\quad
	k=0,\ldots,N_{x}-1
	\,,
\end{equation}
$N_{\tau},N_{x}>0$ and the corresponding discrete inner products 
\begin{equation}
	\label{eq:24.05.13_04}
	\langle f, g\rangle_{\tau} = \sum_{j=0}^{N_{\tau}-1}f(\tau_{j})g(\tau_{j})w_{j}
	\,,
	\quad
	w_{j} = \frac{2\pi}{2N_{\tau}+1}	
	\,,
\end{equation}
and
\begin{equation}
	\label{eq:24.05.13_05}
	\langle f, g\rangle_{x} = \sum_{k=0}^{N_{x}-1}f(x_{k})g(x_{k})\varpi_{k}
	\,,
	\quad
	\varpi_{k} = \frac{2\pi}{2N_{x}+1}	
	\,,
\end{equation}
which approximate the respective continuous inner products.

Plugging \eqref{eq:24.05.13_02} into Eq.~\eqref{eq:23.02.15_02} and requiring the residual to be orthogonal to $\cos(2m+1)\tau\sin(2n+1)x$, for $m=0,\ldots,N_{\tau}-1$ and $n=0,\ldots,N_{x}-1$, we get
\begin{equation}
	\label{eq:24.05.13_06}
	\left(\frac{\pi}{2}\right)^{2}\left(-\Omega^{2}(2m+1)^{2}+(2n+1)^{2}\right)a_{mn} + a^{(3)}_{mn} = 0
	\,,
\end{equation}
where
\begin{equation}
	\label{eq:24.05.13_07}
	a^{(3)}_{mn} = \sum_{j=0}^{\tilde{N}_{\tau}-1}\sum_{k=0}^{\tilde{N}_{x}-1}\left(u_{N_{\tau},N_{x}}(\tilde{\tau}_{j},\tilde{x}_{k})\right)^{3}\cos(2m+1)\tilde{\tau}_{j}\sin(2n+1)\tilde{x}_{k}\,\tilde{w}_{j}\tilde{\varpi}_{k}
	\,,
\end{equation}
are the Fourier coefficients of the cubic term, computed in the physical space. As such these coefficients are homogeneous polynomials of degree 3 in $a_{mn}$'s. Note that to eliminate the aliasing errors in \eqref{eq:24.05.13_07} we use the discrete inner products with $\tilde{N}_{\tau}=3N_{\tau}-1$ and $\tilde{N}_{x}=3N_{x}-1$ quadrature points $(\tilde{\tau}_{j},\tilde{w}_{j})$ and $(\tilde{x}_{k},\tilde{\varpi}_{k})$ respectively. Therefore, the integral in \eqref{eq:24.05.13_07} is exact and this formulation is equivalent to the Galerkin approximation. However, because it belongs to a class of pseudo-spectral methods, it has much lower computational complexity than the traditional Galerkin approach, which is its main advantage. Although the exact Jacobian for this system of equations can be derived relatively easily, its computation is complex and resource-intensive. Therefore, we compute the Jacobian matrix numerically, using a finite difference approximation.

For further reference, the energy of the solution \eqref{eq:24.05.13_02} can be obtained by inserting the truncated series \eqref{eq:24.05.13_02} into \eqref{eq:24.05.08_01} and evaluating the integrand at $\tau = \pi/2$. The result is
\begin{equation}
	\label{eq:24.05.13_08}
	E = \frac{\pi}{4}\Omega^{2} \sum_{n=0}^{N_{x}-1}\left(\sum_{m=0}^{N_{\tau}-1}(-1)^{m}(2m+1)a_{mn}\right)^{2}
	\,.
\end{equation}

The described discretisation turns the problem of finding time-periodic solutions into solving an algebraic system of the form 
\begin{equation}
	\label{eq:24.05.12_01}
	F(\textbf{a},\Omega)=0\,,
	\quad
	F:\mathbb{R}^{n}\times \mathbb{R} \rightarrow \mathbb{R}\,,
	\quad
	n=N_{\tau}N_{x}
	\,,
\end{equation}
where $\textbf{a}$ is a vector storing the Fourier coefficients \eqref{eq:24.05.13_02}. Based on current and previous results \cite{Maliborski.2015}, we expect that there exists a continuous family of solutions, a path of solutions, that bifurcates from $(\textbf{a},\Omega)=(0,1)$. However, as one may encounter other bifurcation points along the path, a natural continuation, with $\Omega$ as a parameter, will fail at such locations.
To overcome this, we use a path following strategy, namely the pseudo-arclength continuation method \cite{Keller.1987}, which we briefly review. Given a solution $(\textbf{a}^{0},\Omega^{0})$ on a solution path $\mathbb{R} \supset I \ni s\rightarrow (\textbf{a}(s),\Omega(s))$ and a unit tangent vector $(\dot{\textbf{a}}^{0},\dot{\Omega}^{0})$ we look for a solution $(\textbf{a}^{1},\Omega^{1})$ to
\begin{equation}
	\label{eq:24.05.12_02}
	F(\textbf{a}^{1},\Omega^{1}) = 0\,,
	\quad
	G(\textbf{a}^{1},\Omega^{1}) \equiv \left(\textbf{a}^{1}-\textbf{a}^{0}\right)\cdot \left(\dot{\textbf{a}}^{0}\right)^{\transpose} + \left(\Omega^{1}-\Omega^{0}\right)\dot{\Omega}^{0} - \Delta s = 0
	\,.
\end{equation}
The system of equations \eqref{eq:24.05.12_02} is solved using the Newton's scheme, while the extra equation guarantees that we are able to find solutions even when the Jacobian of $F$ becomes degenerate or ill-defined. The step \eqref{eq:24.05.12_02} is then iterated as we follow the solution curve. The initial tangent vector is computed from the condition $\frac{\diff{}}{\diff{s}}F(\textbf{a}(s),\Omega(s))=0$. The adjustable parameter $\Delta s > 0$ controls the step-size along the solution curve. For more details see \cite{Keller.1987}.

The use of the pseudo-arclength continuation method was crucial for efficiently exploring large solutions. However, for solutions of very small amplitude, a natural continuation was sufficient. We discuss such solutions first.

\subsection{Small solutions}
\label{sec:SmallSolutions}

\begin{figure}[!t]
	\centering
	\includegraphics[width=1.00\textwidth]{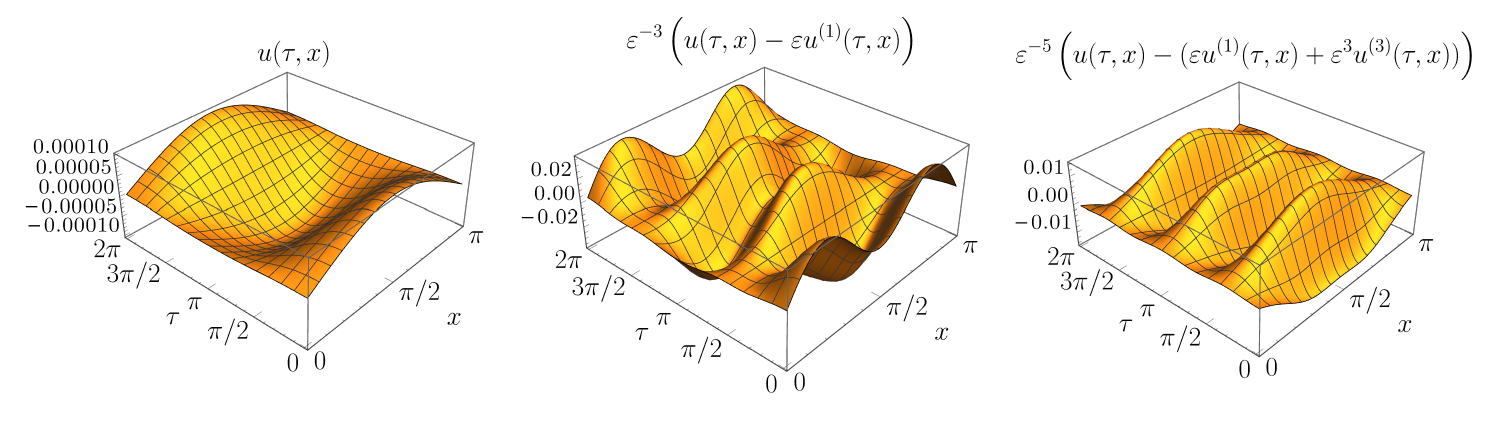}
	\caption{The spatio-temporal plot of the numerical solution $u(\tau,x)$ (left panel) of size corresponding to $\varepsilon=10^{-4}$. The middle and right panels highlight the difference between the numerical data and first- and third-order accurate approximations respectively. Note that these differences have been rescaled by the appropriate powers of $\varepsilon$.}
	\label{fig:SmallProfiles}
\end{figure}
\begin{figure}[!t]
	\centering
	\includegraphics[width=0.475\textwidth]{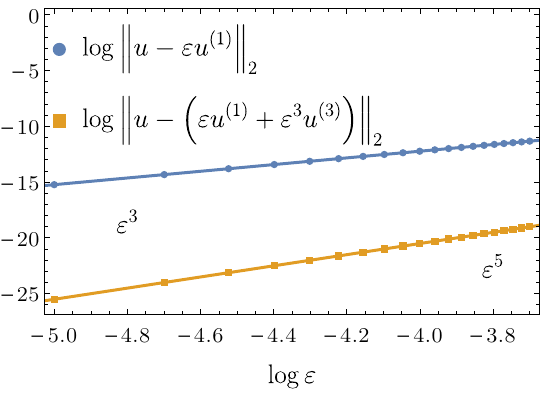}
	\caption{Scaling of the $L^2$ norm of a difference between numerical and perturbative solutions with respect to the solution amplitude $\varepsilon$. The lines are the fits to the data (points) with slopes corresponding to \mbox{$\|u-\varepsilon u^{(1)}\|_{2}\sim\varepsilon^{3}$} and $\|u-\left(\varepsilon u^{(1)}+\varepsilon^{3} u^{(3)}\right)\|_{2}\sim\varepsilon^{5}$ respectively.}
	\label{fig:SmallProfilesError}
\end{figure}

Here we compare perturbative and numerical results for small amplitude. Since we have only two leading order terms at our disposal, we focus on values of the expansion parameter $0<\varepsilon\leq 2\times 10^{-4}$. To achieve accurate results, we solve the system of equations \eqref{eq:24.05.13_06} for relatively high truncation $N_{\tau}=N_{x}=32$ and use extended precision with 64 significant digits.

\begin{figure}[!t]
	\centering
	\includegraphics[width=1.00\textwidth]{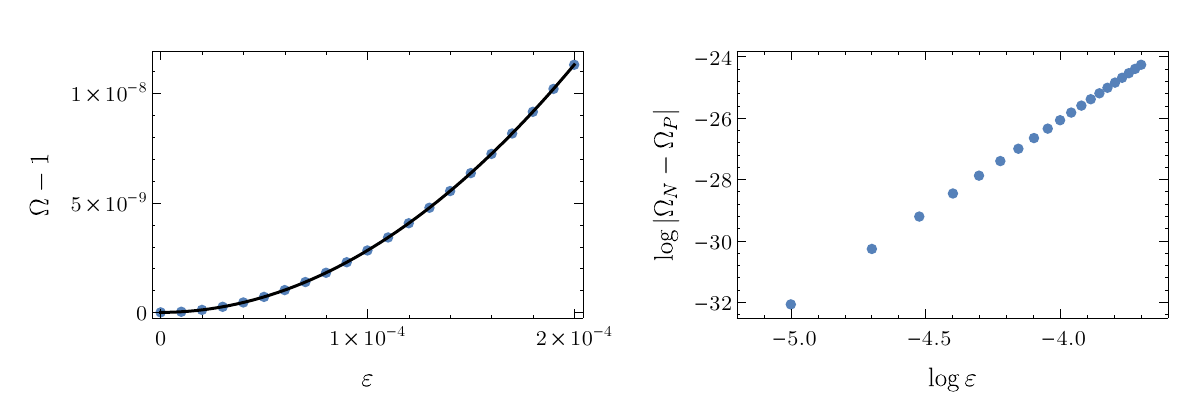}
	\caption{Comparison of the frequencies of small time-periodic solutions calculated numerically and using the perturbative method, see \eqref{eq:23.02.15_04}, \eqref{eq:23.02.24_10}, and \eqref{eq:24.05.09_01}. The left panel shows the frequency $\Omega$ as a function of the amplitude $\varepsilon$, while the right panel displays the absolute error between the numerical $\Omega_{N}$ and perturbative $\Omega_{P}$ result. The data points form a line with a slope of $6$, which is consistent with the perturbative expansion \mbox{$|\Omega_{N}-\Omega_{P}|\sim\varepsilon^{6}$}.}
	\label{fig:SmallFrequency}
\end{figure}

First, we examine solutions with a fixed value of the parameter $\varepsilon$. Specifically, we  compare profiles of numerically calculated solutions with the perturbative series.\footnote{After adjusting the phase difference, cf. \eqref{eq:24.05.13_02} and \eqref{eq:23.02.15_06}.} For small values of the parameter, the agreement between the two approaches is not visible on the scale of the plot presented in Fig.~\ref{fig:SmallProfiles}. Moreover, the difference scaled by appropriate powers of $\varepsilon$ remains small and consistent with the order of perturbative expansion. This makes us confident about both results. Additionally, we investigate how the difference between the two approximations behaves when $\varepsilon$ varies. The dependence of the $L_{2}$ norm of this difference as a function of $\varepsilon$ is presented in Fig.~\ref{fig:SmallProfilesError}. The error terms scale according to the analytical predictions.

\begin{figure}[!t]
	\centering
	\includegraphics[width=0.93\textwidth]{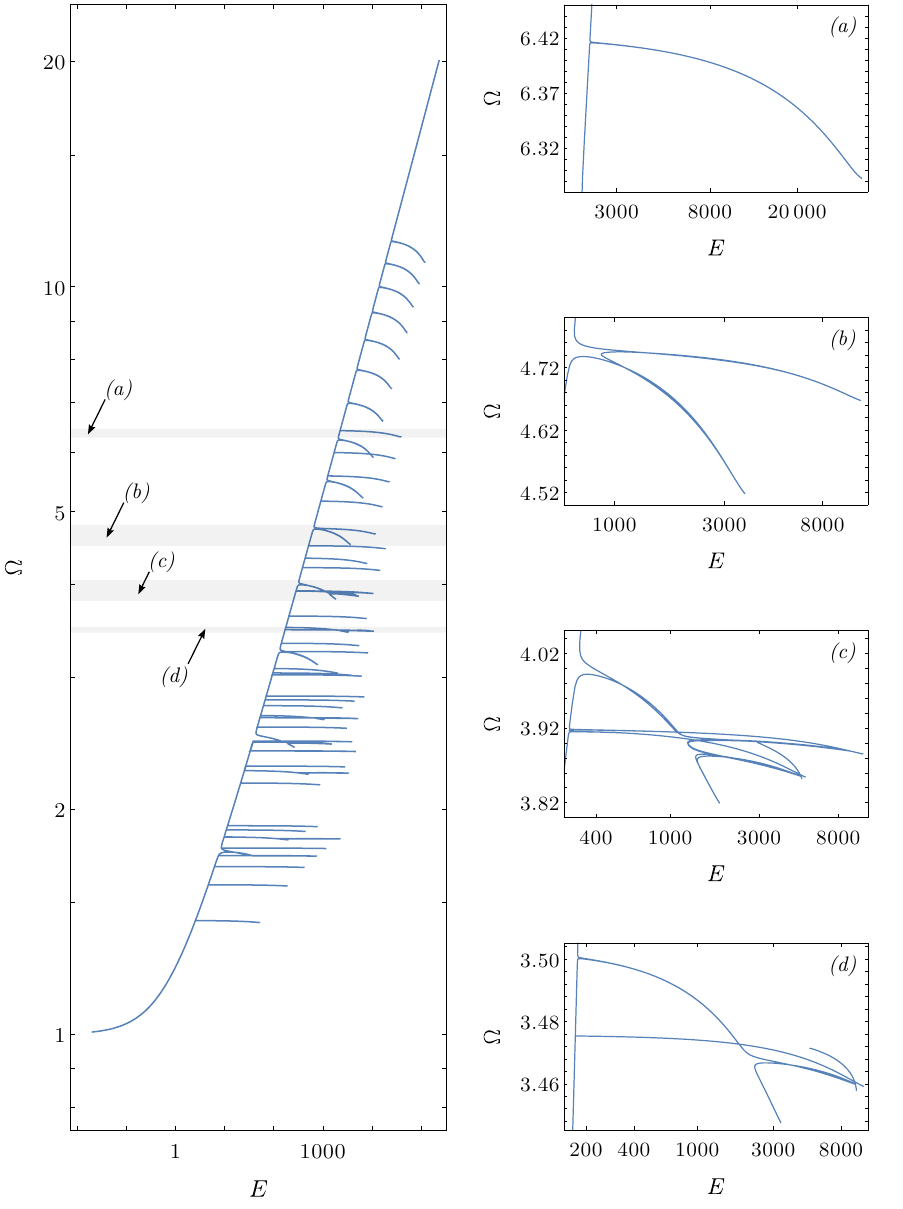}
	\caption{The structure of time-periodic solutions bifurcating from the fundamental frequency $\Omega=1$ found with the help of the arc-length algorithm, see description around Eq.~\eqref{eq:24.05.12_02}. We present results for $N_{\tau}=N_{x}=16$ mode truncation in the Galerkin system \eqref{eq:24.05.13_06}-\eqref{eq:24.05.13_07}. Note the usage of the log-log scale on the full $\Omega-E$ diagram. The right panels zoom in on the details of the left plot (indicated by shaded regions).}
	\label{fig:EOmegaFull}
\end{figure}

Furthermore, we compare the frequencies of small solutions, as shown in Fig.~\ref{fig:SmallFrequency}, where we also observe consistency between the two approaches. The solution of the Galerkin system accurately approximates the biquadratic polynomial \eqref{eq:23.02.15_04} with coefficients given in \eqref{eq:23.02.24_10} and \eqref{eq:24.05.09_01}.

\subsection{Large solutions}
\label{sec:LargeSolutions}

For larger solutions we rely purely on the Galerkin approach, whose quality and robustness is discussed at the end of this subsection, and use the continuation method to follow the path of solutions starting roughly at the rightmost point on Fig.~\ref{fig:SmallFrequency}. Since for such solutions the natural continuation with $\Omega$, used before is not practical, particularly if we encounter folds or bifurcation points \cite{Keller.1987}, we use the pseudo-arclength parametrisation discussed in Sec.~\ref{sec:NumericalApproach}.

For concreteness, we focus on solutions of the system \eqref{eq:24.05.13_06}-\eqref{eq:24.05.13_07} with $N_{\tau}=N_{x}=16$ mode truncation. Later, we comment on how the results behave as $N_{\tau}$ and $N_{x}$ increase. To get rid of the parametrisation ambiguity, we present solutions on the frequency-energy plot, see Eq.~\eqref{eq:24.05.13_08}. A quick look at Fig.~\ref{fig:EOmegaFull} reveals that the solution path is not a simple continuation of the graph shown in Fig.~\ref{fig:SmallFrequency}. Instead, the curve exhibits twists and turns, forming an intricate pattern on the $\Omega-E$ diagram.

However, it is notable that there is a main (diagonal) part of the diagram that resembles what could be considered a natural continuation of the plot in Fig.~\ref{fig:SmallFrequency}. We refer to this part of the diagram as the \textit{primary branch}. Along the primary branch the solution resembles the fundamental mode $\cos{\tau}\sin{x}$, albeit with a significant admixture of higher harmonics. We expect this curve can be continued indefinitely as $\Omega \rightarrow \infty$ without additional features appearing beyond those shown.

\begin{figure}[!t]
	\centering
	\includegraphics[width=0.98\textwidth]{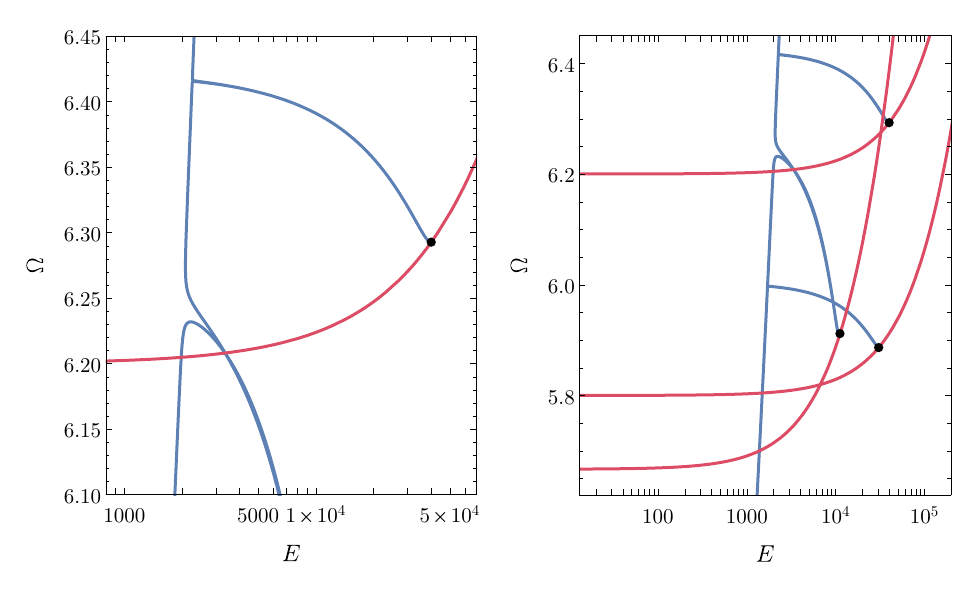}
	\caption{Left: The bifurcation at one of the solution branches, see top right panel on Fig.~\ref{fig:EOmegaFull}. The primary (blue) and secondary (red) branches meet at the bifurcation point denoted by a dot (other intersections of the blue line are an artefacts of projecting the solutions onto the energy plane). The secondary branch (approximately) coincides with an appropriately rescaled primary branch, see Eq.~\eqref{eq:24.05.28_01}. Right: Such bifurcations occur at each of the turning points visible on Fig.~\ref{fig:EOmegaFull}. Here, we observe the cases leading to rescaled solutions bifurcating from frequencies $\Omega=31/5$, $29/5$, and $17/3$.}
	\label{fig:EOmegaBifurcation}
\end{figure}

Emerging from the main part are many almost vertical branches of finite length. Some of these attached branches are simple, while others have a more complicated form. A few examples are shown in Fig.~\ref{fig:EOmegaFull}. The intersections of the branches, clearly visible in the zoomed-in panels, are an artefact of projecting the solution $(\mathbf{a},\Omega)$ of the Galerkin system onto the energy plane. It is important to note that the solution path forms a continuous curve in the $(\mathbf{a},\Omega)$ space as expected from a solution of a finite-dimensional algebraic system.

\begin{figure}[!t]
	\centering
	\includegraphics[width=0.98\textwidth]{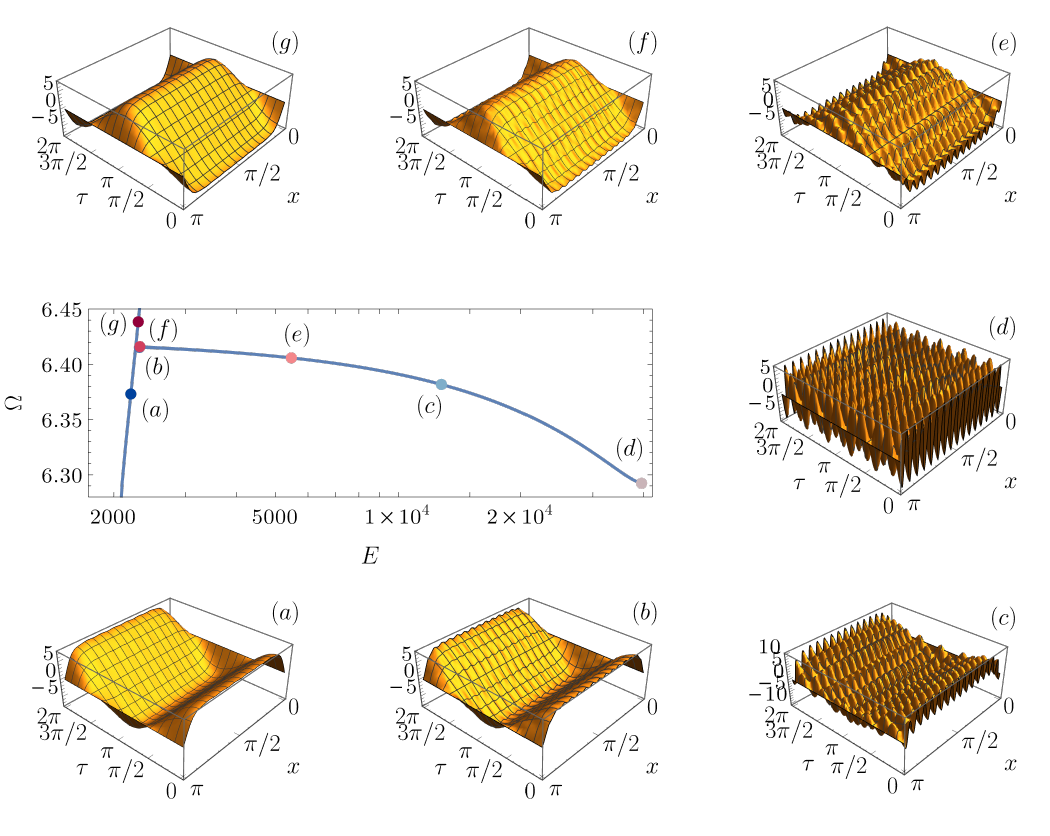}
	\caption{The spatio-temporal profiles of time-periodic solutions along one of the branches of Fig.~\ref{fig:EOmegaFull}. Following the solution curve (a)-(g), starting at the bottom left plot and going counterclockwise, we observe the qualitative change. Upon entering the branch (a)-(b) the high frequency mode $\cos{5\tau}\sin{31x}$ gets excited while the fundamental mode $\cos{\tau}\sin{x}$ is gradually suppressed. At the bifurcation point (d) the fundamental mode vanishes, and the former mode becomes dominant. Continuation beyond (d) sends the solution towards the main branch (g) through (e) and (f), this time with the fundamental mode having opposite sign. When leaving the branch, (f) and (g), the solution resembles, modulo sign, the profiles on (b) and (a) respectively. The branch consists of two very close lines intersecting at (d) on the $\Omega-E$ diagram, which are not distinguishable on the scale of the plot. Consequently, the points (b) and (f) appear to overlap in this plot.}
	\label{fig:3Dprofiles}
\end{figure}

The locations where the ``vertical branches'' (hereafter referred to simply as branches) appear to terminate and turn back towards the primary branch mark the bifurcation points. Using as the initial guess in the Newton's algorithm data which is a perturbation of the solution at a bifurcation point one can switch branches. Then, instead of being ``reflected'' from the bifurcation point and following the original solution path, one can jump onto a new branch (the \textit{secondary branch}), as shown in Fig.~\ref{fig:EOmegaBifurcation}. Tracing this new branch (red curve in Fig.~\ref{fig:EOmegaBifurcation}) backwards (decreasing energy) and also following it forward (increasing energy) we find that it resembles the rescaled, according to Eq.~\eqref{eq:24.05.28_01}, part of the path close to the bifurcation point $(\Omega,E)=(1,0)$ of the primary branch. Recall that the scaling symmetry, valid for the PDE \eqref{eq:23.02.15_02}, does not hold for the truncated system \eqref{eq:24.05.13_06}-\eqref{eq:24.05.13_07}.

A close inspection of solutions on branches reveals a drastic change in their profile as we follow the solution path. Specifically, as we enter a branch, higher harmonics associated with a particular branch, get excited while the fundamental mode gets suppressed. At the bifurcation point, the solution is dominated by one of the high modes and its higher harmonics. Moreover, moving away from the bifurcation point and exiting the branch, the situation reverses: the higher modes are suppressed, and the lowest mode gets amplified, albeit with the opposite sign as at the ``entrance point.'' This occurs in such a way that, at the point where the branch meets the primary branch, the solution resembles, modulo sign, the configuration before it landed on the branch. This behaviour is illustrated in Fig.~\ref{fig:3Dprofiles}, where we see a bifurcation to a solution based on the particular mode $\cos{5\tau}\sin{31x}$. Additionally, in Fig.~\ref{fig:Modes}, we plot the amplitudes of the fundamental mode and the high-frequency mode, which is dominant at the secondary branch. In more complicated branches, such as the two lowest plots in Fig.~\ref{fig:EOmegaFull}, the situation is analogous, but due to a more intricate structure, it is inconvenient for a clear presentation. On such a branch, several modes compete and get subsequently excited or suppressed as we move along the solution curve.

\begin{figure}[!t]
	\centering
	\includegraphics[width=0.475\textwidth]{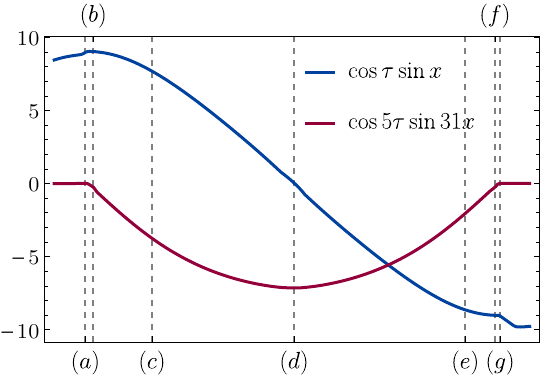}
	\caption{The amplitudes of the modes $\cos{\tau}\sin{x}$ and $\cos{5\tau}\sin{31x}$ along the solution curve, parametrised by a pseudo-arclength parameter, see discussion above Eq.~\eqref{eq:24.05.12_02}. The marked locations (a)-(g) correspond to the profiles shown in Fig.~\ref{fig:3Dprofiles}.}
	\label{fig:Modes}
\end{figure}

To conclude we comment on the reliability of the Galerkin approximation outlined in Sec.~\ref{sec:NumericalApproach} in the large amplitude regime. To demonstrate this, we compute the $L^{2}$ norm of the residual calculated by plugging the truncated Galerkin series \eqref{eq:24.05.13_02} into Eq.~\eqref{eq:23.02.15_02}. Next we check its behaviour with increasing truncation $N_{\tau}$, $N_{x}$. In general we observe exponential (spectral) convergence, however, its rate and certain details depend on the particular solution considered. We illustrate this in Fig.~\ref{fig:Residual} for two characteristic points of the $\Omega-E$ diagram: first located on the primary branch near the point where a new vertical branch emerges for $N_{\tau}=N_{x}=15$, second located at the end of the vertical branch (bifurcation point, c.f. Fig.~\ref{fig:EOmegaBifurcation}). Those tests, together with the comparison with perturbative expansion for small amplitudes, make us confident about the presented results.

\begin{figure}[!t]
	\centering
	\includegraphics[width=1.00\textwidth]{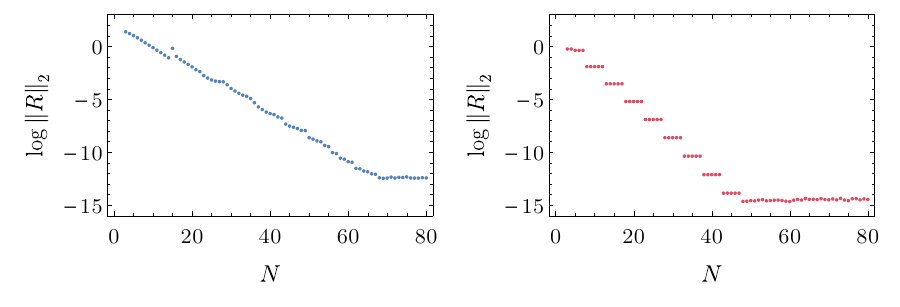}
	\caption{The $L^{2}$ norm of the residual $R$ of the truncated Galerkin series \eqref{eq:24.05.13_02} versus the truncation size $N=N_{\tau}=N_{x}$. Left: solution with $\Omega\approx 2.66911$ located on the primary branch. For $N=15$ we observe a jump of the residual due to the emergence of the vertical branch, which does not affect the overall spectral convergence. Right: solution at the end of the vertical branch (bifurcation point) with the dominant mode $\cos{3\tau}\sin{5x}$ and frequency $\Omega\approx 1.71514$. The stair-like structure can be explained by the spectral composition of this solution as it contains modes $\cos\left(3(2m+1)\tau\right)\sin\left(5(2n+1)\tau\right)$, $m\geq 0$, $n\geq 0$. In both cases we reach machine precision for large enough truncations.
    }
	\label{fig:Residual}
\end{figure}

\subsection{Focusing nonlinearity}
\label{sec:FocusingNonlinearityNum}
Finally, let us comment on the focusing equation. Using the transformation \eqref{eq:24.05.13_01}, adjusted to the phase shift adapted in the Galerkin system \eqref{eq:24.05.13_02}, we generate solutions from the already available data for the defocusing nonlinearity. As a result, the energy-frequency plot gets rescaled, and this scaling introduces significant qualitative changes, cf. Fig.~\ref{fig:EOmegaFull} and Fig.~\ref{fig:EOmegaFocusing}. In this case, the frequency $\Omega$ of small solutions decreases with amplitude (or equivalently with energy), which is in agreement with the analysis in Sec.~\ref{sec:FocusingNonlinearity}.

Along the primary branch the frequency drops to zero, and in the limit $\Omega\rightarrow 0$ we expect the solution to converge to the static profile given by Eq.~\eqref{eq:24.05.26_01}. Thus, the energy should approach $E_{1}$ in this limit. The apparently different behaviour of large solutions, as shown in Fig.~\ref{fig:EOmegaFocusing}, is the effect of a finite truncation of the Galerkin system. This is because the static solution $s_{1}$ is composed of infinitely many Fourier modes, and thus is not a solution of the truncated system. Similarly to the defocusing case, solutions bifurcating from higher eigenfrequencies can be obtained using the scaling \eqref{eq:24.05.28_01} with $m=1$. For $n>1$, the limiting solution as $\Omega\rightarrow 0$ will be $s_{n}$.

\begin{figure}[!t]
	\centering
	\includegraphics[width=0.98\textwidth]{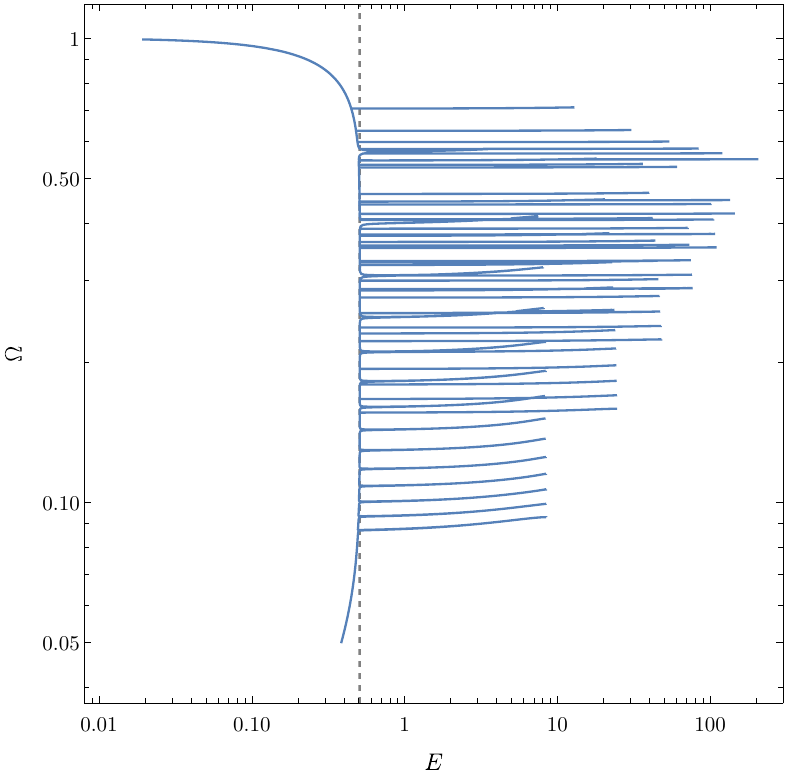}
	\caption{The energy-frequency diagram of time-periodic solutions for focusing nonlinearity (note the use of log-log scale). The plot was generated from the data shown in Fig.~\ref{fig:EOmegaFull} using the transformation \eqref{eq:24.05.13_01}. The dashed vertical line marks the energy of the static solution $E_{1}\approx 0.508157$, cf. \eqref{eq:24.05.26_01}. In the limit of $N_{x},N_{\tau}\rightarrow\infty$ we expect the curve to reach the point $(E_{1},0)$, corresponding to the static solution $s_{1}$, cf. Eq.~\eqref{eq:24.05.26_01}. However, for finite truncations the solution curve approaches the origin, as reaching the static solution requires infinitely many modes.}
	\label{fig:EOmegaFocusing}
\end{figure}

\section{Conclusions}
\label{sec:Conclusions}

The numerical results which we present were obtained by solving the Galerkin system truncated to 16 modes both in time and in space. Experiments with systems using larger truncations indicate that new and more intricate structures appear on the $\Omega-E$ diagram, however, features from smaller truncations remain largely unaffected by inclusion of more modes. For solutions of the PDE problem we expect the primary branch to have infinitely many branches, which on the $\Omega-E$ diagram will result in infinitely thin and complex structures. In particular, some branches will accumulate near $\Omega=1$ resulting in a Cantor-like set resembling the set of frequencies excluded in the rigorous results mentioned in the introduction. Let us point out that this structure is absent in Fig.~\ref{fig:SmallFrequency}, as for it to be present one would need to consider Galerkin systems of size greater than $\sim (10^8)^{2}$, see \cite{FicekB.2024}.

The new branches which emerge as we increase $N_{\tau}$ and $N_{x}$, get finer and finer and eventually become impossible to resolve numerically with a fixed $\Delta s$ set in \eqref{eq:24.05.12_02}. We expect that the diagram shown in Fig.~\ref{fig:EOmegaFull} already misses solutions, especially those located in the lower part of the diagram. Therefore, for practical reasons, we restricted our attention to relatively small truncations.

The equivalent diagram of Fig.~\ref{fig:EOmegaFull} for PDE, showing primary branch originating from $(\Omega,E)=(1,0)$ with associated branches, would present the minimal subset of time-periodic solutions. The complete diagram should include not only solutions bifurcating from higher eigenfrequencies, which could be generated using \eqref{eq:24.05.28_01} with $m=1$, but also secondary branches obtained by other rescalings \eqref{eq:24.05.28_01} of the primary branch. Those secondary branches will be attached to each of the (infinitely many) bifurcation points, cf.~Fig.~\ref{fig:EOmegaBifurcation}. Thus, the complete diagram of time-periodic solutions will have a fractal-like structure.

Our results suggest that solutions exist for all frequencies $\Omega>1$ ($\Omega<1$) for defocusing (focusing) nonlinearity. Solutions which are located on branches are drastically different from the ones on the primary branch. This might be directly related to the appearance of the restriction to the Cantor set of frequencies in rigorous results. In the forthcoming paper \cite{FicekB.2024}, we will provide a systematic analysis of the structure presented in Figs.~\ref{fig:EOmegaFull} and \ref{fig:EOmegaFocusing}, supporting our claims regarding the solutions of the PDE. As this analysis is beyond the scope of the current paper, it will be presented in a separate work.

\appendix

\vspace{8ex}

\section{Solution to the elliptic modulus equation}
\label{sec:DieckmannIdentity}
The goal of this Appendix is to find solutions to \eqref{eq:23.02.24_11}. Using \eqref{eq:24.06.03_01} and \eqref{eq:23.02.24_03} we can rewrite this relation in a more explicit way
\begin{align}\label{eq:04.06.24_1}
	\frac{\left(2k^2-1\right)K(k)}{2\pi}+3\sum_{n=1}^{\infty} \frac{q^{2n-1}}{(1+q^{2n-1})^{2}}=0
	\, ,
\end{align}
with $q$ given in \eqref{eqn:nome}.
Hence, we will prove the following

\begin{lemma}
Equation \eqref{eq:04.06.24_1} has on $[0,1]$ a unique solution given by $k=0.451075598810\ldots$.
\end{lemma}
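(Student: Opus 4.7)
The plan is to reduce \eqref{eq:04.06.24_1} to an equation involving only the complete elliptic integrals $K(k)$ and $E(k)$, and then analyse this one-variable equation by elementary calculus.

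Step one is a Parseval-type identity for the series. Starting from the Fourier expansion \eqref{eq:23.02.24_01}, square both sides and integrate over the full period $[0,4K(k)]$. Orthogonality of the cosines $\cos((2n-1)\pi x/(2K))$ on this interval reduces the left-hand side to a single sum in the $f_{2n-1}$, while the left-hand side is computed from the classical antiderivative $\int_0^{4K}\cn^2(t,k)\,\diff t=(4/k^2)[E(k)-(1-k^2)K(k)]$ (which follows from $\cn^2=1-\sn^2$ and $\int\dn^2\,\diff t = E(\mathrm{am}(t),k)$). This yields the closed form
\[
\sum_{n=1}^{\infty}\frac{q^{2n-1}}{(1+q^{2n-1})^2}=\frac{K(k)\bigl[E(k)-(1-k^2)K(k)\bigr]}{2\pi^2},
\]
after which \eqref{eq:04.06.24_1}, upon clearing the common factor $K(k)>0$, becomes equivalent to the condition that $g(k):=6E(k)+(8k^2-7)K(k)$ vanishes, matching \eqref{eq:24.06.03_02}.

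Step two establishes existence by the intermediate value theorem. Direct evaluation gives $g(0)=6(\pi/2)+(-7)(\pi/2)=-\pi/2<0$, while as $k\to 1^{-}$ we have $E(k)\to 1$ bounded and $K(k)\to+\infty$ with coefficient $8k^2-7\to 1>0$, so $g(k)\to+\infty$. Continuity then supplies at least one zero in $(0,1)$.

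Step three, which is where the real work lies, is uniqueness via strict monotonicity of $g$. Using $\diff E/\diff k=(E-K)/k$ and $\diff K/\diff k=[E-(1-k^2)K]/(k(1-k^2))$, a short computation gives
\[
k(1-k^2)\,g'(k)=(2k^2-1)\,E(k)+(1-k^2)(8k^2+1)\,K(k).
\]
On $(1/\sqrt{2},1)$ both terms are manifestly positive. On $(0,1/\sqrt{2}]$ the coefficient $2k^2-1\le 0$, and here I invoke the elementary bound $E(k)\le K(k)$ to replace $E$ by $K$ in the first term; this collapses the right-hand side to the lower bound $[(2k^2-1)+(1-k^2)(8k^2+1)]K(k)=k^2(9-8k^2)K(k)>0$. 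Hence $g'>0$ throughout $(0,1)$, so the root is unique; a short numerical (Newton) computation on $g$ then pins it down as $k=0.451075598810\ldots$.

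The main obstacle is precisely step three: the reduction formula and endpoint evaluations are essentially mechanical, but the monotonicity of $g$ is not obvious at the outset because the coefficient of $E$ in $g'$ changes sign at $k=1/\sqrt{2}$. The proof succeeds only because the algebraic manipulation after using $E\le K$ produces the clean factorisation $k^2(9-8k^2)$, which is positive on the whole of $(0,1)$.
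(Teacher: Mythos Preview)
Your proof is correct, and in two places it is tidier than the paper's. For the series identity $\sum_{n\ge 1}q^{2n-1}/(1+q^{2n-1})^2=K[E-(1-k^2)K]/(2\pi^2)$ the paper argues via theta-function relations: it expresses both sides through $\theta_3(0,q)$ and its derivatives, using Legendre's relation and the heat-equation link between $\partial_z^2\theta_3$ and $\partial_q\theta_3$. Your Parseval route on the Fourier expansion of $\cn$ together with $\int_0^{4K}\cn^2=(4/k^2)[E-(1-k^2)K]$ gives the same identity with less machinery, at the price of relying on the standard antiderivative $\int\dn^2=E(\mathrm{am}(t),k)$. For monotonicity of $g$, the paper invokes the sharper two-sided estimate $\sqrt{1-k^2}\,K<E<\bigl((1+\sqrt{1-k^2})/2\bigr)^2 K$, splitting at $k=1/\sqrt{2}$; your use of the cruder $E\le K$ on $(0,1/\sqrt{2}]$ still collapses the numerator to $k^2(9-8k^2)K>0$, which is enough. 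The paper's inequalities would become necessary if the constant factors in $g$ were less favourable, but for this particular $g$ your simpler bound suffices.
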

\begin{proof}
The proof consists of two parts. First we show that one can transform \eqref{eq:04.06.24_1} to \eqref{eq:24.06.03_02}, an expression not involving the nome function $q$. Then we use inequalities for elliptic integrals to prove that \eqref{eq:24.06.03_02} has a unique solution on $[0,1]$.

We begin by stating the following identity
\begin{equation}
	\label{eq:23.02.24_12}
	\sum_{n=1}^{\infty} \frac{q^{2n-1}}{(1+q^{2n-1})^{2}} = \frac{K(k)}{2\pi^{2}}\left(E(k)-(1-k^{2})K(k)\right)
	\,.
\end{equation}
This relation can be found, for example, in the collection by Dieckmann \cite{web:dieckmann}, but we were unable to find its explicit proof in literature.
Hence, for the completeness, we give a short derivation based on elementary identities for a theta function \cite[\href{https://dlmf.nist.gov/20.4}{Sec.~20.2(i)}]{NIST:DLMF}
\begin{equation}\label{eqn:theta_function}
\theta_3 (z,q)=1+2\sum_{k=1}^\infty q^{n^2}\cos 2nz\, .
\end{equation}
Derivatives of $\theta_3$ with respect of the first and second variable will be denoted here with the use of $\partial_z$ and $\partial_q$, respectively. Then it holds
\begin{equation}\label{eqn:theta_bis}
\frac{\partial_z^2 \theta_3(0,q)}{\theta_3(0,q)}=-8\sum_{n=1}^{\infty} \frac{q^{2n-1}}{\left(1+q^{2n-1}\right)^2}\, ,
\end{equation}
see \cite[\href{https://dlmf.nist.gov/20.4}{Sec.~20.4}]{NIST:DLMF}, as well as
\begin{equation}\label{eqn:K_theta}
K(k)=\frac{\pi}{2}\theta_3^2(0,q)\, ,
\end{equation}
see \cite[\href{https://dlmf.nist.gov/20.9}{Sec.~20.9}]{NIST:DLMF}.

One can use a differential equation satisfied by $K$ \cite[\href{https://dlmf.nist.gov/19.4.i}{Sec.~19.4(i)}]{NIST:DLMF} to simplify the right hand side of \eqref{eq:23.02.24_12} to
\begin{equation}\label{eqn:App_rhs_1}
\frac{K(k)}{2\pi^{2}}\left(E(k)-(1-k^{2})K(k)\right)=\frac{k(1-k^2)}{2\pi^2}K(k) \frac{\diff{}}{\diff{k}}K(k)\,.
\end{equation}
While $K(k)$ can be simply expressed with $\theta_3$ using \eqref{eqn:K_theta}, for its derivative it holds
\begin{equation}\label{eqn:K_prime_theta}
 \frac{\diff{}}{\diff{k}} K(k)=\pi\, \theta_3(0,q)\, \partial_q \theta_3(0,q) \frac{\diff{q}}{\diff{k}}\, .
\end{equation}
To get the derivative of $q$ over $k$ we simply differentiate its definition \eqref{eqn:nome}
\begin{align*}
 \frac{\diff{q}}{\diff{k}}&=\pi q \left[-\frac{1}{K(k)} \frac{\diff{}}{\diff{k}} K\left(\sqrt{1-k^2}\right)+\frac{K\left(\sqrt{1-k^2}\right)}{K(k)^2} \frac{\diff{}}{\diff{k}}K(k) \right]\\
 &=\pi q \left[-\frac{1}{K(k)} \frac{k^2 K\left(\sqrt{1-k^2}\right)-E\left(\sqrt{1-k^2}\right)}{k\left(1-k^2\right)} +\frac{K\left(\sqrt{1-k^2}\right)}{K(k)^2}\frac{E(k)-\left(1-k^{2}\right)K(k)}{k\left(1-k^2\right)} \right]\\
 &=\frac{\pi q}{k\left(1-k^2\right)} \frac{1}{K(k)^2} \left[E\left(\sqrt{1-k^2}\right)K(k) + K\left(\sqrt{1-k^2}\right)E(k)-K\left(\sqrt{1-k^2}\right)K(k)\right]\\
 &=\frac{\pi^2 q}{2k\left(1-k^2\right)K(k)^2}\, .
\end{align*}
At the second step we have again used the differential equation for $K$, while the last equality is a result of the Legendre's relation \cite[\href{https://dlmf.nist.gov/19.7.1}{(19.7.1)}]{NIST:DLMF}.
In the end, by putting together \eqref{eqn:K_theta}, \eqref{eqn:App_rhs_1}, and \eqref{eqn:K_prime_theta}, the right hand side of \eqref{eq:23.02.24_12} becomes
\begin{equation}\label{eqn:rhs_2}
\frac{K(k)}{2\pi^{2}}\left(E(k)-(1-k^{2})K(k)\right)=\frac{q}{2}\frac{\partial_q \theta_3(0,q)}{\theta_3(0,q)}\, .
\end{equation}
At the same time, using \eqref{eqn:theta_bis} we can write the left hand side of \eqref{eq:23.02.24_12} simply as
\begin{equation}\label{eqn:lhs_1}
\sum_{n=1}^{\infty} \frac{q^{2n-1}}{(1+q^{2n-1})^{2}}= -\frac{1}{8} \frac{\partial_z^2 \theta_3(0,q)}{\theta_3(0,q)} \, .
\end{equation}
right hand sides of equations \eqref{eqn:rhs_2} and \eqref{eqn:lhs_1} are equal, as can be clearly seen from differentiation of the definition \eqref{eqn:theta_function}. This proves the identity \eqref{eq:23.02.24_12} and, as a result, lets us rewrite \eqref{eq:04.06.24_1} as \eqref{eq:24.06.03_02}, i.e.,
\begin{equation*}
	K(k)\left(6E(k)+(8k^{2}-7)K(k)\right) = 0
	\,.
\end{equation*}

Since we are interested in $k\in[0,1]$, where $K(k)\geq \pi/2>0$, to investigate roots of this equation it is enough to study the function $g$ defined as
\begin{equation}
	\label{eq:23.02.24_14}
	g(k) = 6E(k)+(8k^{2}-7)K(k)
	\,.
\end{equation}
Its values range from $g(0)=-\pi/2$ to $+\infty$ as $k\rightarrow 1$. Differentiation of $g$ leads, after application of differential equations satisfied for $E$ and $K$ \cite[\href{https://dlmf.nist.gov/19.4.i}{Sec.~19.4(i)}]{NIST:DLMF}, to
\begin{equation}
	\label{eq:23.02.24_15}
	g'(k) = \frac{(2k^2-1)E(k)+(1-k^2)(1+8k^2)K(k)}{k(1-k^2)}
	\,.
\end{equation}
Let us define $h(k)$ as the numerator of this expression. Elliptic integrals satisfy for $k\in (0,1)$ the following inequalities \cite[\href{https://dlmf.nist.gov/19.9}{Sec.~19.9}]{NIST:DLMF}
\begin{equation}
	\label{eq:23.02.24_16}
	\sqrt{1-k^2}\,K(k)<E(k)<\left(\frac{1+\sqrt{1-k^2}}{2}\right)^2 K(k)
	\,.
\end{equation}
Hence, for $k<\frac{1}{\sqrt{2}}$ it holds
\begin{equation}
	\label{eq:23.02.24_17}
	h(k)>\left(\frac{1}{4}(2k^2-1)(1+\sqrt{1-k^2})+(1-k^2)(1+8k^2)\right)K(k)>0
	\,,
\end{equation}
while for $k\geq\frac{1}{\sqrt{2}}$ one has
\begin{equation}
	\label{eq:23.02.24_18}
	h(k)\geq\left(\sqrt{1-k^2}+(1-k^2)(1+8k^2)\right)K(k)>0
	\,.
\end{equation}
It means that $g(k)$ is increasing in the interval $(0,1)$ and as a result it has exactly one zero. One can find numerically that it is located at $k=0.451075598810\ldots$, cf.~\cite{Vernov.1998,Khrustalev.2000,Khrustalev.2001}.
\end{proof}

\section{Explicit expression for source term at third order}
\label{sec:DerivationOfExplicitExpressionsForDjk}

In this Appendix we derive a compact formula for $B_{jk}$, i.e.\ non-diagonal Fourier coefficients of $\left(u^{(1)}\right)^3$, the source term in Eq.~\eqref{eq:23.02.15_05b}. Hence, we are looking for the following decomposition
\begin{equation}\label{eq:23.05.17_01}
\sum_{n,j,k=1}^\infty f_n f_j f_k \sin{nx}\sin{jx}\sin{kx}\sin{n\tau}\sin{j\tau}\sin{k \tau} = \sum_{N,M=1}^\infty B_{NM} \sin{Nx}\sin{M\tau}
\end{equation}
for $N\neq M$. Let us fix some positive $N$ and $M$ satisfying this condition. Using \eqref{eq:23.02.15_10} the left hand side of Eq.~\eqref{eq:23.05.17_01} can be expressed as a sum of sixteen terms of the type
\begin{equation*}
\sum_{n,j,k=1}^\infty f_n f_j f_k \, \sin(\pm n\pm j\pm k)x \, \sin(\pm n\pm j\pm k)\tau\, .
\end{equation*}
Each of these terms can be checked for the possibility of contributing to $\sin{Nx}\sin{M\tau}$, as we show below.

Let us begin by considering the term with $\sin(n+j-k)x \, \sin(n+j-k)\tau$. For it to contribute, we would need to have $n+j-k$ equal to $N$ or $-N$ and at the same time equal to $M$ or $-M$. It leads to either $M=N$ or $M=-N$ -- both of these alternatives give us a contradiction. Hence, this term does not contribute to the non-diagonal elements. The same reasoning can be repeated for the remaining three diagonal terms.

Now, let us investigate $\sin(n+j+k)x \, \sin(-n+j+k)\tau$. Assume that $n+j+k=N$ and $-n+j+k=M$. It is possible only for $N$ and $M$ of the same parity and $N>M$. Then we get $k=(N+M)/2-j$ and $n=(N-M)/2$ and the corresponding sum can be written as
\begin{equation*}
\sum_{j=1}^{\frac{N+M}{2}-1} f_{\frac{N-M}{2}} f_j f_{\frac{N+M}{2}-j} \, \sin N x \, \sin M \tau\, .
\end{equation*}
Alternatively, we can assume $n+j+k=N$ and $-n+j+k=-M$ leading to $n=(N+M)/2$ and $k=(N-M)/2-j$ and eventually giving the sum
\begin{equation*}
\sum_{j=1}^{\frac{N-M}{2}-1} f_{\frac{N+M}{2}} f_j f_{\frac{N-M}{2}-j} \, \sin N x \, \sin M \tau\, .
\end{equation*}
These two cases exhaust all possibilities here since $n+j+k=-N$ would require negative numbers.

Similar analysis repeated for the remaining terms in the end gives us
\begin{multline}
\label{eq:23.05.17_02a}
B_{NM}=\frac{3}{16}\left(-\sum_{j=1}^{\frac{N+M}{2}-1}f_{\frac{|N-M|}{2}} f_j f_{\frac{N+M}{2}-j}
+ \sum_{j=1}^{\frac{|N-M|}{2}-1}f_{\frac{N+M}{2}} f_j f_{\frac{|N-M|}{2}-j} 
\right.
\\
\left.
+ 2 \sum_{j=1}^{\infty}f_{\frac{|N-M|}{2}+j} f_j f_{\frac{N+M}{2}}
- 2 \sum_{j=\frac{N+M}{2}+1}^{\infty}f_{j-\frac{N+M}{2}} f_j f_{\frac{|N-M|}{2}}\right).
\end{multline}
This expression can be further simplified using the fact that $f_n$ is given by Eq.\ (\ref{eq:23.02.24_03}). As an example, let us consider 
\begin{equation*}
 \sum_{j=1}^{\infty}f_{\frac{|N-M|}{2}+j} f_j f_{\frac{N+M}{2}}= f_{\frac{N+M}{2}} \sum_{j=1}^{\infty}f_{\frac{|N-M|}{2}+j} f_j\, .
\end{equation*}
It is nonzero only if both $j$ and $|N-M|/2+j$ are odd, so it must hold $|N-M|\equiv 0$ mod $4$. For simplification let us introduce $\alpha=|N-M|/2$. Then we have
\begin{equation*}
 \sum_{j=1}^{\infty}f_{\alpha+j} f_j= \sum_{j=1}^\infty \frac{q^{(2j-1)+\alpha/2}}{\left(1+q^{2j-1}\right)\left(1+q^{\alpha+2j-1}\right)}\, .
\end{equation*}
Since
\begin{equation*}
\frac{q^{n}}{\left(1+q^{n}\right)\left(1+q^{\alpha+n}\right)}=\frac{1}{1-q^{\alpha}}\left(\frac{q^n}{1+q^n}-\frac{q^{n+\alpha}}{1+q^{n+\alpha}} \right)\, ,
\end{equation*}
this sum can be written as
\begin{equation*}
 \sum_{j=1}^{\infty}f_{\alpha+j} f_j= \frac{q^{\alpha/2}}{1-q^\alpha} \left(\sum_{j=1}^\infty \frac{q^{2j-1}}{1+q^{2j-1}} - \sum_{j=1}^\infty \frac{q^{{2j-1}+\alpha}}{1+q^{{2j-1}+\alpha} } \right)= \frac{q^{\alpha/2}}{1-q^\alpha}\sum_{j=1}^{|N-M|/4} \frac{q^{2j-1}}{1+q^{2j-1}}\,.
 \end{equation*}
 At the same time,
\begin{equation*}
\sum_{j=1}^{\frac{|N-M|}{2}-1}f_{\frac{N+M}{2}} f_j f_{\frac{|N-M|}{2}-j}=f_{\frac{N+M}{2}} \sum_{j=1}^{\frac{|N-M|}{2}-1}f_j f_{\frac{|N-M|}{2}-j}
\end{equation*}
is nonzero only if $|N-M|\equiv 0$ mod $4$. This sum can be simplified using
\begin{equation*}
\frac{1}{\left(1+q^{n}\right)\left(1+q^{\alpha-n}\right)}=\frac{1}{1-q^{\alpha}}\left(\frac{1}{1+q^{\alpha-n}}-\frac{q^n}{1+q^n} \right),
\end{equation*}
so
\begin{equation*}
\sum_{j=1}^{\frac{|N-M|}{2}-1}f_j f_{\alpha-j}=\frac{q^{\alpha/2}}{1-q^{\alpha}}\sum_{j=1}^{\frac{|N-M|}{4}}\left(\frac{1}{1+q^{\alpha-(2j-1)}}-\frac{q^{2j-1}}{1+q^{2j-1}} \right)
\,.
\end{equation*}
As a result, the second and third sums in Eq.\ \eqref{eq:23.05.17_02a} can be expressed as
\begin{multline}
\sum_{j=1}^{\frac{|N-M|}{2}-1}f_{\frac{N+M}{2}} f_j f_{\frac{|N-M|}{2}-j} 
+ 2 \sum_{j=1}^{\infty}f_{\frac{|N-M|}{2}+j} f_j f_{\frac{N+M}{2}}=
\\
\frac{q^{\beta/2}}{1+q^\beta}\frac{q^{\alpha/2}}{1-q^\alpha}\sum_{j=1}^{\alpha/2}\left(\frac{1}{1+q^{\alpha-(2j-1)}}+\frac{q^{2j-1}}{1+q^{2j-1}}\right)=\frac{\alpha}{2}\frac{q^{\beta/2}}{1+q^\beta}\frac{q^{\alpha/2}}{1-q^\alpha}\, ,
\end{multline}
where we have introduced $\beta=(N+M)/2$.

Analogously, two remaining sums are nonzero only for  $|N-M|\equiv 2$ mod $4$. They can be simplified in a similar way leading us to the following compact expressions
\begin{align}
	\label{eq:23.05.17_02b}
	B_{NM}=\begin{cases}
	\displaystyle\frac{3}{64} |N-M| \frac{q^{|N-M|/4}}{1-q^{|N-M|/2}}\frac{q^{(N+M)/4}}{1+q^{(N+M)/2}}\, ,\qquad & \mbox{if $|N-M|\equiv 0$ mod $4$}\,,\vspace{2ex}
	\\
	\displaystyle -\frac{3}{64} (N+M) \frac{q^{(N+M)/4}}{1-q^{(N+M)/2}}\frac{q^{|N-M|/4}}{1+q^{|N-M|/2}}\, , \qquad &\mbox{if $|N-M|\equiv 2$ mod $4$}\,.
	\end{cases}
\end{align}
These expressions can be further simplified using hyperbolic functions and ultimately giving Eq.\ (\ref{eq:23.03.14_001}).

\section{Resonance conditions at fifth order}
\label{sec:ResonanceConditionsAtThirdOrder}
As mentioned in Section \ref{sec:higher_orders}, to remove the resonances at fifth order it is sufficient to set $b_{jj}=0$ and $\xi_4=-\xi^2_2/2$. Here we would like to back up this observation. In particular, we show that $b_{jk}$ being antisymmetric indeed leads to the lack of resonances in the last term of the right hand side of (\ref{eq:23.02.15_05c}).

Let us expand the considered term
\begin{align}\label{eqn:AppB_Sum}
u^{(3)}\left(u^{(1)}\right)^2= \sum_{j,k,m,n=1}^{\infty}b_{jk}a_m a_n\cos j \tau \, \cos m \tau \, \cos n \tau\, \sin kx \, \sin mx \, \sin nx\, .
\end{align}
Then we can use \eqref{eq:3coss} and \eqref{eq:23.02.15_10} to write the expression under the sum as (after neglecting the factor $1/16$)
\begin{align*}
b_{jk}a_m a_n \left[ \cos(j + m + n) \tau+ \cos(j - m - n) \tau+ \cos(- j + m - n) \tau+ \cos(- j - m + n) \tau\right]\\
\times\left[ \sin (k + m + n)x+ \sin (k - m - n)x+ \sin (- k +m -n)x+ \sin (-k -m +n)x\right].
\end{align*}
Since we are interested in resonant terms, we need to focus on elements of this sum where the mode number inside one of the $\cos$ functions agrees with the mode number of one of the $\sin$ functions. For example, assume that for some fixed $j$, $k$, $m$, and $n$ it holds $j+m+n=-k+m-n$. Then $b_{jk} a_m a_n \cos(j + m + n) \tau\, \sin (-k + m - n)x$ is a resonant term. However, the sum (\ref{eqn:AppB_Sum}) also includes the term $b_{kj} a_m a_n \cos(-k + m - n) \tau\, \sin (j + m + n)x$. Since $b_{kj}=-b_{jk}$ both these terms cancel each other. The same argumentation holds for any other pair of $\cos$ and $\sin$ functions, also including potential differences in signs of their arguments. Hence, all resonant terms in \eqref{eqn:AppB_Sum} vanish as expected.

\vspace{4ex}

\printbibliography

\end{document}